\def\indented#1{\list{}{}\item[]}
\let\indented=\endlist
\newtheorem{theorem}{Theorem}[section]
\newtheorem{lemma}[theorem]{Lemma}
\newtheorem{proposition}[theorem]{Proposition}
\title{Number of regions created by random chords in the circle}
\author{Shi Feng }
\affil{University of Washington, Seattle}
\date{June 2021}
\begin{document}

\maketitle
\begin{abstract}
In this paper we discuss the number of regions in a unit circle after drawing $n$ i.i.d. random chords in the circle according to a particular family of distribution. We find that as $n$ goes to infinity, the distribution of the number of regions, properly shifted and scaled, converges to the standard normal distribution and the error can be bounded by Stein's method for proving Central Limit Theorem.
\end{abstract}

\section{Introduction}
\subsection{Literature review}
Geometric probability has an old history starting $17^{th}$ century \cite{kalousova2011origins}. However, we usually consider Buffon to be the starter for geometric probability in $18^{th}$ century, who raised the ``Buffon's Needle Problem " \cite{buffon1777essai}. We focus on one of the branches in geometric probability called the random chord. One of the most famous problem about random chords is the Bertrand paradox \cite{bertrand1889calcul}. It asks what is the probability for a random chord in a unit circle to have a length larger than $\sqrt{3}$. Then in 1964, David and Fix started to investigate random chords rigorously to study random mechanism of chromosomes in cell \cite{david1964intersections}, where five randomization models are described. The result is repeated in the book $\textit{Geometric Probability}$ \cite{solomon1978geometric}, from where we will borrow the definitions for our problem. 

Consider the unit circle. A chord can be identified by two parameters, the distance from the origin and the direction of the chord with respect to the origin. To create a random chord, we use the half the angle subtended by the chord ($\theta$, as shown in Figure 1) to measure the distance from the origin and use the position of the left endpoint from the perspective of the origin (shown as the point $C$ in Figure 1) to measure direction. We assume $\theta$ is a continuous random variable with a probability density function $f$ (we call it distance distribution) in domain $[0,\frac{\pi}{2}]$, and the left endpoint is uniformly distributed on the circumference of the circle, and they are independent. With these two parameters, we can define every chord in the circle.

\begin{figure}[h]
  \centering
  \includegraphics[scale=3]{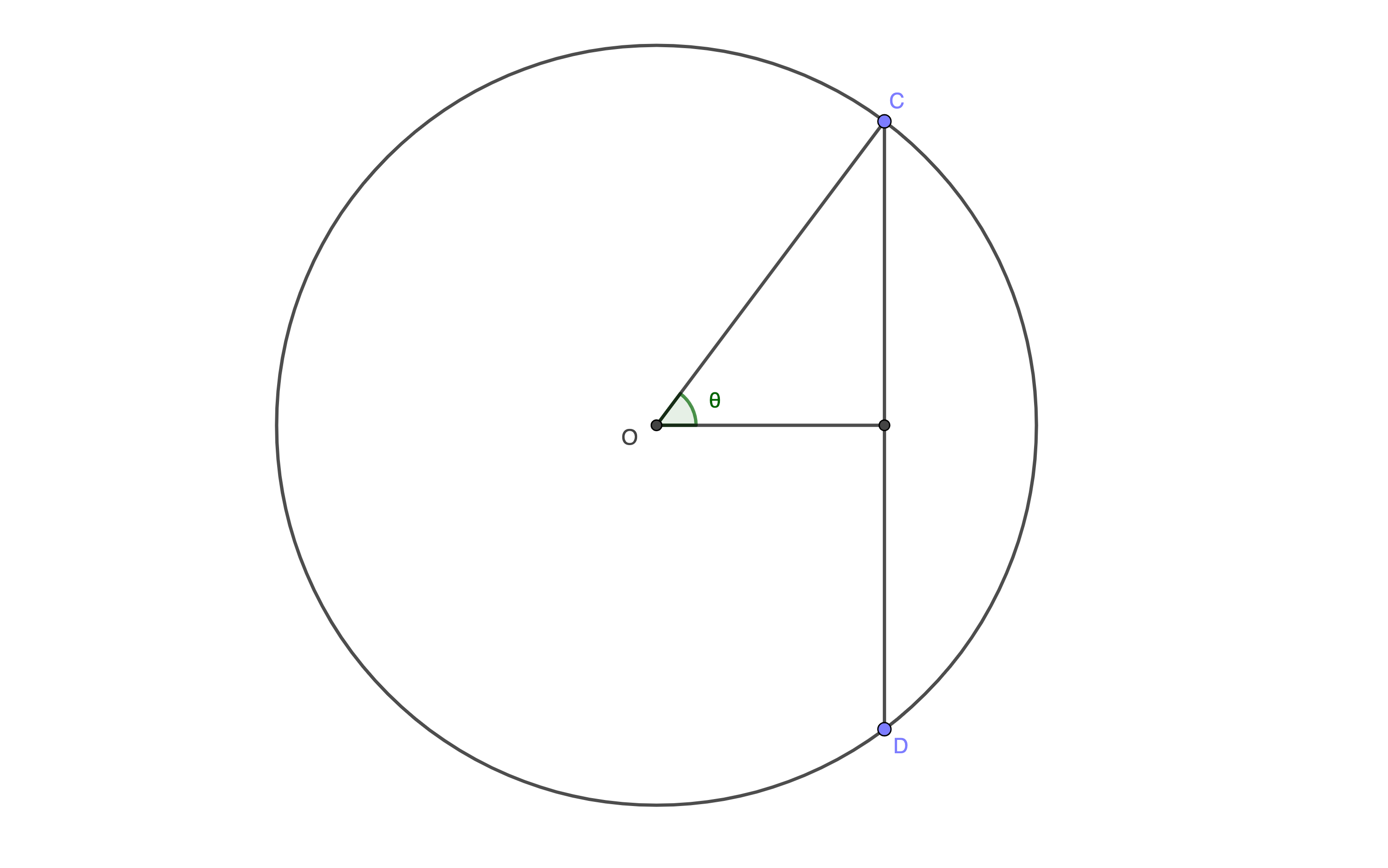}
  \caption{$\overline{CD}$ is the random chord and $\theta$ is the angle that represents the chord.}
\end{figure}

This paper focus on the number of regions in the circle created by those random chords. After drawing $n$ chords, the unit disc is partitioned into disjoint regions through which no chord passes. For example, if no chord is drawn, there is only one region, which is the area enclosed by the circle. After drawing one random chord, the circle is cut into two parts and there are two regions. If two random chords are drawn, there will be two cases. If the two chords have an intersection, there will be 4 regions. On the other hand, if the two chords have no intersection, there will only be 3 regions. Our question is the distribution of the number of regions after drawing $n$ random chords when $n$ is large. For example, in Figure 2, 4 chords are drawn and there are 8 regions in the circle.

\begin{figure}[h]
  \centering
  \includegraphics[scale=3]{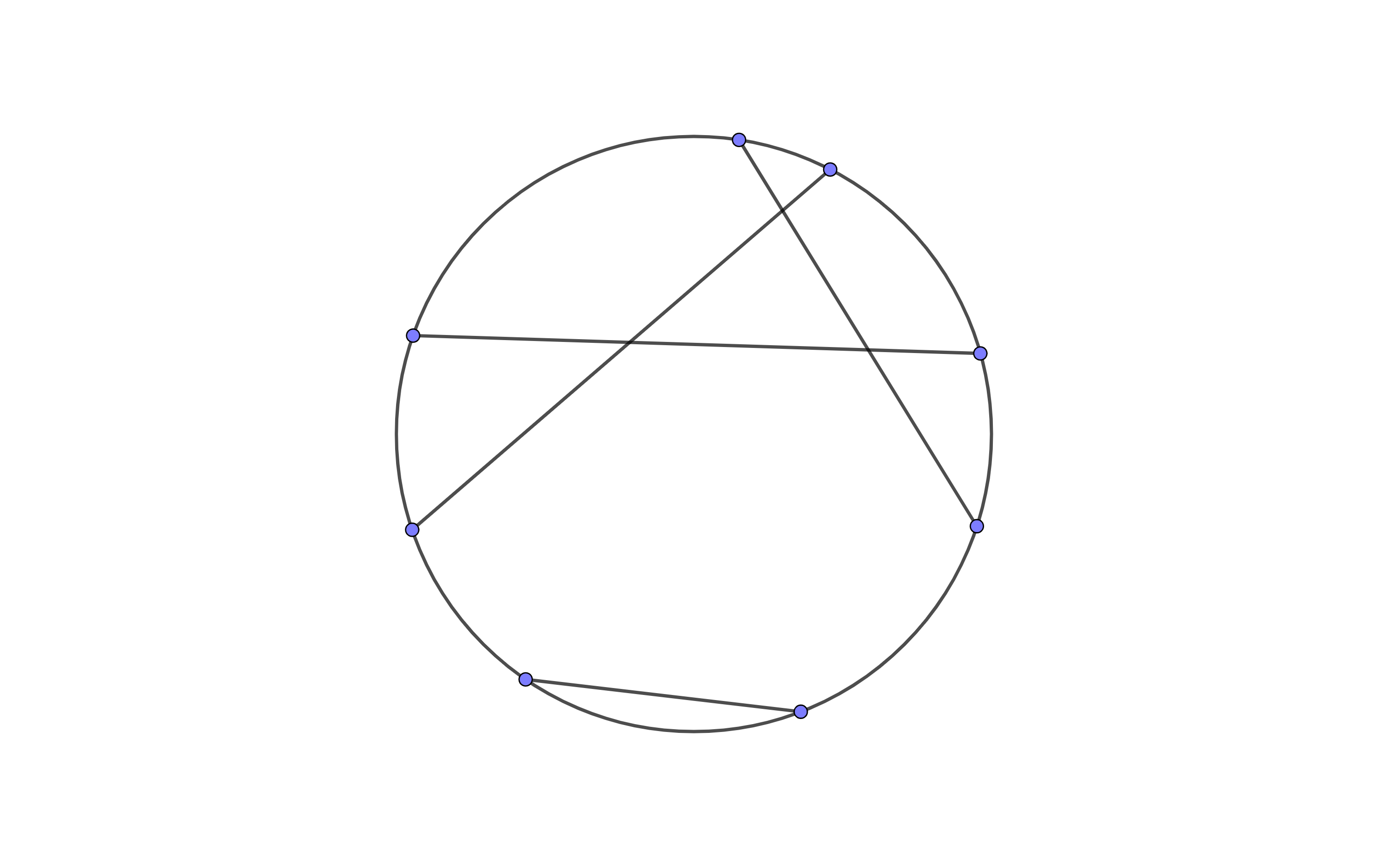}
  \caption{4 chords in the circle create 8 regions}
\end{figure}

Closely related to our paper is the Poisson line process (PLP), which is derived from Poisson point process (PPP). A planar (2D) line can be identified by two parameter \cite{dhillon2020poisson} ``(i) the perpendicular distance $y$ of the line from the origin, and (ii) the angle $x$ subtended by the perpendicular dropped onto the line from the origin with respect to the positive x-axis in counterclockwise direction." A planar PLP can be mapped from a PPP in a 2D plane where $x$ is in range $[0,2\pi)]$ and $y$ is in range $[0,\infty)$. The investigation of PLP starts since 1940s by S. Goudsmit \cite{goudsmit1945random}. The following research includes topics about PLP hitting a convex region and polygons created by PLP by Miller \cite{miles1964random}. Now, Poisson line processes have been applied in various fields, ``including material science,
geology, image processing, transportation, localization, and wired and wireless communications"\cite{dhillon2020poisson}.

\subsection{Computer simulation}
Let's do a computer simulation to see what does the distribution of the number of regions look like when $n$ i.i.d. chords are drawn from the above model for large $n$. Consider the probability density function
\[ f(\theta) =
  \begin{cases}
    \sin(\theta),       & \quad \text{if $0\leq\theta\leq\frac{\pi}{2}$}\\
    0,  & \quad \text{else}.
  \end{cases}
\]
Again, we will assume the left endpoint is uniformly distributed on the circumference of the circle.  We draw 100 i.i.d chords from $f(\theta)$ in the circle and count the number of regions. The process is repeated 1000 times and we plot the result into a histogram, which is shown in Figure 3.

\begin{figure}[h]
  \centering
  \includegraphics[scale=0.3]{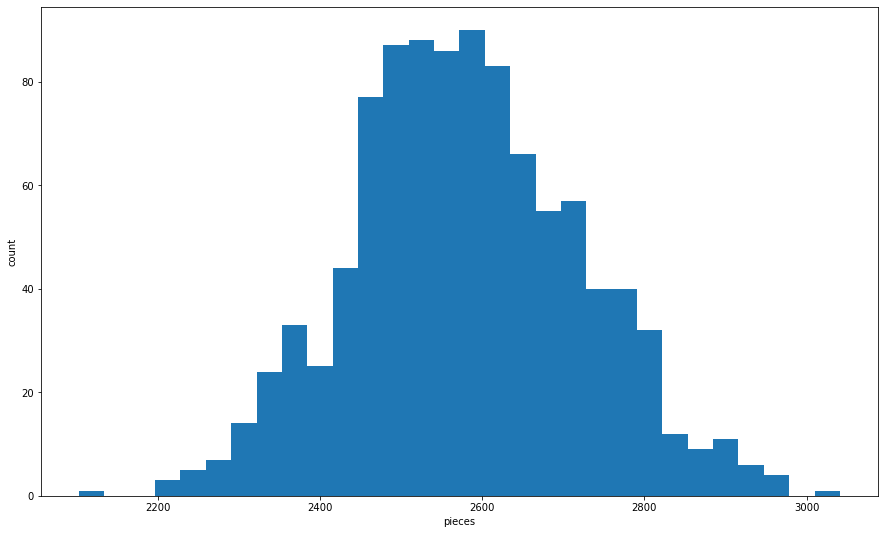}
  \caption{A histogram of 1000 simulations of the number of regions
created when 100 random chords are drawn according to the distance distribution $f(\theta) = \sin{\theta}$ and left endpoint uniformly distributed on the circumference of the circle.}
\end{figure}
The histogram appears to be a normal distribution. To test this hypothesis, we do a Kolmogorov–Smirnov test \cite{massey1951kolmogorov} on the distribution. To find the expected value and standard deviation of the regions, we need to borrow the results from equation (12) and (13) in the following sections. In this case, the expected value is 2576 and the standard deviation is 144.3. After normalizing the distribution and putting it into the (KS) test, we get statistic = 0.03 and p-value = 0.29, which is larger than 0.05. Therefore, the null hypothesis cannot be rejected. The cumulative distributions of the number of regions and normal distribution are shown in Figure 3.
\begin{figure}[h]
  \centering
  \includegraphics[scale=0.3]{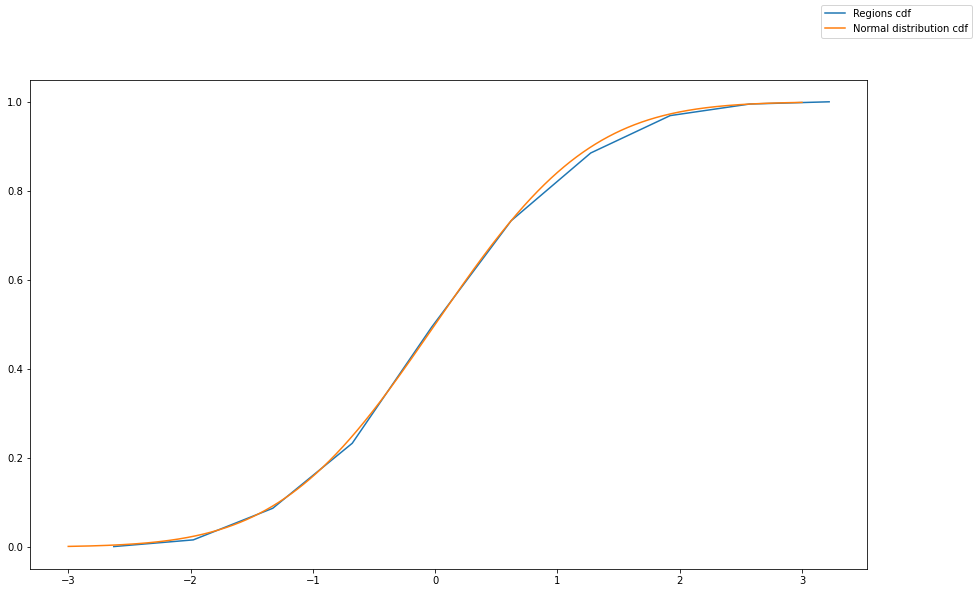}
  \caption{The comparison of the cumulative distribution between the regions and normal distribution.}
\end{figure}

To strengthen this hypothesis, we do simulation and test on various numbers of random chords, and the result is showing in the following table.

\begin{tabular}{lrrrr}
\toprule
{} &  number of chords &  repetition &  statistic &  p-value \\
\midrule
0 &              10.0 &       100.0 &       0.09 &     0.36 \\
1 &              20.0 &       100.0 &       0.12 &     0.08 \\
2 &              40.0 &       100.0 &       0.07 &     0.74 \\
3 &              80.0 &       100.0 &       0.07 &     0.73 \\
4 &             160.0 &       100.0 &       0.11 &     0.14 \\
\bottomrule
\end{tabular}\\

We can see all the simulations pass the test. Now we prove that the distribution converges to normal distribution using Stein's method for Central Limit Theorem. Here is the main result of this paper. It will be repeated and explained in the following sections.

For a function $f: \mathbb{R} \rightarrow \mathbb{R}$, denote 
\begin{align*}
    ||f|| = \sup_x |f(x)|.
\end{align*}
Suppose $h$ is a differential function that $||h||$ and $||h'||$ are finite. Then we denote $\Phi h$ to be $\mathbb{E}[h(Z)]$ and $\Phi(w)$ to be $P(Z<w)$ where $Z$ is a standard normal random variable. We also define notation $\Theta$ such that $a_n \sim \Theta(n^i)$ if there exist $k_2>k_1>0$ and $n_0$, and for all $n>n_0$, $k_1\cdot n^i \leq a_n \leq k_2\cdot n^i$.

\begin{theorem}
Suppose $F_n$ is the number of regions after drawing $n$ random chords. Then, its expectation, $\mathbb{E}[F_n]$ is $\sim \Theta(n^2)$ and its standard deviation, $\sigma_n \sim \Theta(n^{3/2})$. Moreover, for $n > 5$,
\begin{align*}
    \sup_{||h||\leq 1, ||h'||\leq 1}\left(
    \left|\mathbb{E}\left[h\left(\frac{F_n - \mathbb{E}(F_n)}{\sigma_n}\right) \right] - \Phi h \right|\right) \leq \frac{6n^{5/2}}{\sigma_n^2} + \frac{2n^4}{\sigma_n^3} \sim \Theta(n^{-1/2})
\end{align*}
and
\begin{align*}
    \sup_{w}\left(\left|P\left(\frac{F_n - \mathbb{E}[F_n]}{\sigma_n} < w \right) - \Phi(w)\right|\right) \leq 14\frac{n^4}{\sigma_n^3} \sim \Theta(n^{-1/2}).
\end{align*}
\end{theorem}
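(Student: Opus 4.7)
My approach would be to reduce the problem to a sum of pair-indicators with a transparent dependency graph, and then feed that sum into a standard local-dependence version of Stein's method. The starting point is the classical fact (provable by adding chords one at a time, or by Euler's formula applied to the planar graph formed by the chords) that if $n$ chords are in general position inside the disc, then $F_n = 1 + n + I_n$, where $I_n = \sum_{1\le i<j\le n} X_{ij}$ and $X_{ij}$ is the indicator that the $i$-th and $j$-th chords cross in the open disc. Since almost surely (under the assumed continuous distribution) no three chords are concurrent and no two are collinear, this identity holds a.s., and consequently $F_n - \mathbb{E}[F_n] = I_n - \mathbb{E}[I_n]$, so all the CLT-type statements reduce to the analogous statements for $I_n$.

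The crucial structural feature is that, since the $n$ chords are i.i.d., $X_{ij}$ is independent of $X_{kl}$ whenever $\{i,j\}\cap\{k,l\}=\emptyset$. Thus $I_n$ is a sum over the complete graph $K_n$ of bounded indicators with a local dependency graph $L$ of degree $O(n)$ (each pair $\{i,j\}$ is only entangled with the $2(n-2)$ pairs sharing one endpoint with it). I would then compute $\mathbb{E}[F_n] = 1 + n + \binom{n}{2}\, p$, where $p=\mathbb{P}(X_{12}=1)$ is a fixed positive constant depending only on $f$ (positivity here is the easy check that two i.i.d. chords have a positive probability of crossing, which follows from the uniform endpoint assumption and the assumption that $\theta$ has a density). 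This immediately gives $\mathbb{E}[F_n]=\Theta(n^2)$. For the variance, I would expand
\begin{align*}
\operatorname{Var}(I_n) = \sum_{\{i,j\}}\operatorname{Var}(X_{ij}) + \sum_{\{i,j\}\sim\{k,l\}}\operatorname{Cov}(X_{ij},X_{kl}),
\end{align*}
where the second sum runs over dependent (edge-sharing) pairs in $L$. The first sum is $\Theta(n^2)$ and the second contains $\Theta(n^3)$ terms, each bounded; so the upper bound $\sigma_n^2 = O(n^3)$ is routine. The main obstacle of the proof, in my view, is the matching lower bound: one must rule out the possibility that the $\Theta(n^3)$ covariance terms conspire to cancel the leading order. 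Concretely I would need to show $\mathbb{E}[X_{12}X_{13}] - \mathbb{E}[X_{12}]\mathbb{E}[X_{13}] \neq 0$ (or, failing that, find positive lower-order contributions), and this is a genuine integral-geometric computation that uses the specific shape of $f$.

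Once $\sigma_n^2\sim \Theta(n^3)$ is in hand, the CLT bounds follow by quoting a standard Stein's method theorem for locally dependent sums. Writing $W=(F_n-\mathbb{E}F_n)/\sigma_n$ as a sum of bounded ($|\,\cdot\,|\le 1/\sigma_n$) variables with dependency neighbourhoods of size $O(n)$, a typical bound (such as that found in Chen--Shao or in Ross's survey) reads, for $h$ with $\|h\|,\|h'\|\le 1$,
\begin{align*}
|\mathbb{E}h(W)-\Phi h| \;\le\; C_1\,\|h'\|\,\frac{(\#\text{indices})(\#\text{neighbours})^{3/2}}{\sigma_n^2} + C_2\,(\|h\|+\|h'\|)\,\frac{(\#\text{indices})(\#\text{neighbours})^2}{\sigma_n^3},
\end{align*}
and plugging in $\#\text{indices}=\binom{n}{2}=O(n^2)$ and $\#\text{neighbours}=O(n)$ produces exactly the shape $O(n^{5/2}/\sigma_n^2)+O(n^4/\sigma_n^3)$. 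Tracking the constants carefully yields the claimed $6n^{5/2}/\sigma_n^2 + 2n^4/\sigma_n^3$. The Kolmogorov-distance bound $14 n^4/\sigma_n^3$ follows from the analogous Stein's-method theorem for the distance $\sup_w|P(W<w)-\Phi(w)|$, which replaces the need for a smooth $h$ at the cost of dropping the first term and absorbing it into the constant in front of the cubic term. Using $\sigma_n\sim\Theta(n^{3/2})$, both bounds are $\Theta(n^{-1/2})$, finishing the theorem.

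Beyond the variance lower bound, a secondary subtlety is bookkeeping: one must enumerate dependent pairs and triples in $L$ carefully enough to recover the stated numerical constants $6,2,14$ rather than generic $O(\cdot)$ constants. This is tedious but not conceptually deep, so I would defer it to a computation after the two main ingredients (the $1+n+I_n$ identity and the variance estimate) are established.
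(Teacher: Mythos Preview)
Your plan is essentially the paper's proof: reduce $F_n$ to $1+n+\sum_{i<j}A_{ij}$ via Euler's formula, exploit the local dependency structure on $K_n$ (each pair $\{i,j\}$ depends only on the $2n-3$ pairs sharing an index), and feed this into off-the-shelf Stein bounds for locally dependent sums. The paper quotes the Rinott--Rotar versions of these bounds (their Theorems~2.1 and~2.2) rather than Chen--Shao/Ross, but the inputs and outputs are the same.

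Two small remarks. First, the variance lower bound you flag as the ``main obstacle'' is handled in the paper without any computation specific to $f$: writing $\mathbb{E}[A_{12}A_{13}]=\tfrac{4}{\pi^2}\mathbb{E}[g(\theta_1)^2]$ and $(\mathbb{E}[A_{12}])^2=\tfrac{4}{\pi^2}(\mathbb{E}[g(\theta_1)])^2$ for a single explicit function $g$, the strict inequality reduces to $\operatorname{Var}(g(\theta_1))>0$, which holds because $g'(\theta)=1-F(\theta)$ cannot vanish identically for a continuous $F$ on $[0,\pi/2]$ with $F(0)=0$. So the $\Theta(n^3)$ variance is a soft fact, not an integral-geometric calculation tied to a particular $f$. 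Second, your displayed generic Stein bound has the wrong exponent in the first term (as written it gives $n^{7/2}/\sigma_n^2$, not $n^{5/2}/\sigma_n^2$); the correct scaling comes from $\sqrt{\#\text{indices}}\cdot(\#\text{neighbours})^{3/2}$ or, as the paper does, from directly bounding $\mathbb{E}\bigl[(\sum_{\alpha}\sum_{\beta\in M_\alpha}(Y_\alpha Y_\beta-\mathbb{E}Y_\alpha Y_\beta))^2\bigr]\le 9n^5$ by counting which quadruples of indices can contribute nonzero covariance. Your stated conclusion $O(n^{5/2}/\sigma_n^2)+O(n^4/\sigma_n^3)$ is correct; only the intermediate formula needs fixing.
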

$\mathbb{E}(F_n)$ and $\sigma_n$ will be calculated in section 2, where Theorem 1.1 is proved in two parts.

\section{Proving Central Limit Theorem by Stein's method}
In the following proof, we will first express the number of regions in terms of the number of ``chord $i$ crosses chord $j$" events. These events satisfy an ``only locally dependent" property. We can then apply known theorems which implement the underlying Stein's method in order to give explicit general bounds for Normal approximation for such ``only locally dependent" events. The contribution of this paper is to evaluate these general bounds in the specific context of our random chords model.

Suppose we draw $n$ i.i.d. random chords (from $f(\theta)$ and left endpoint uniformly distributed on the circumference of the circle) in the circle. Let $R_n$ to be the total number of intersections after drawing $n$ i.i.d random chords. Let $A_{ij}$ be the indicator function that chords $i$ and $j$ cross each other
\[ A_{ij} =
  \begin{cases}
    1,       & \quad \text{if $i$ and $j$ intersect}\\
    0,  & \quad \text{else}.
  \end{cases}
\]
Then according to \cite{solomon1978geometric} page 134-137,
\begin{align}
    \mathbb{E}[R_n] = \sum_{i=1}^{n-1}\sum_{j=i+1}^{n} \mathbb{E}[A_{ij}] = {n \choose 2} \mathbb{E}[A_{12}],
\end{align}
and
\begin{align}
    \mathbb{E}[R_n^2] = {n \choose 2}\mathbb{E}[A_{12}]+6{n \choose 3}\mathbb{E}[A_{12}A_{13}] + 6{n \choose 4}(\mathbb{E}[A_{12}])^2.
\end{align}

As for $\mathbb{E}[A_{12}]$ and $\mathbb{E}[A_{12}A_{13}]$ in (1) and (2), we can bring $i=1$, $j=2$, $k=3$ into the following equations \cite{solomon1978geometric}:
\begin{align}
    \mathbb{E}[A_{ij}] = \frac{4}{\pi}\int_{0}^{\frac{\pi}{2}}\int_{0}^{\theta_i} \theta_j f(\theta_j)f(\theta_i)\;d\theta_j d\theta_i,
\end{align}
and
\begin{align}
    \mathbb{E}[A_{ij}A_{ik}] = \frac{4}{\pi^2} \int_{0}^{\frac{\pi}{2}}\left[\int_{0}^{\theta_i} \theta_j f(\theta_j) \;d\theta_j + \theta_i \int_{\theta_i}^{\frac{\pi}{2}} f(\theta_j)\; d\theta_j \right]^2 f(\theta_i)\; d\theta_i.
\end{align}

With a closer observation of (3) and (4), we find the Proposition 2.1, which will come useful in the following calculation. 
\begin{proposition}
$(\mathbb{E}[A_{ij}])^2 < E[A_{ij}A_{ik}]$ regardless of our choice of probability distribution function $f$ if $\theta$ is a continuous random variable. The inequality also implies $A_{ij}$ and $A_{ik}$ are dependent.
\end{proposition}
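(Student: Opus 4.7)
The plan is to reduce the claimed inequality to Jensen's inequality applied to a single auxiliary function. Let
\begin{align*}
g(\theta_i) \;=\; \int_{0}^{\theta_i} \theta_j\, f(\theta_j)\,d\theta_j \;+\; \theta_i \int_{\theta_i}^{\pi/2} f(\theta_j)\,d\theta_j,
\end{align*}
which is exactly the bracketed quantity appearing inside~(4). Then (4) reads $\mathbb{E}[A_{ij}A_{ik}] = (4/\pi^2)\,\mathbb{E}[g(\theta_i)^2]$, where the expectation is taken against the density $f$.

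The first step I would carry out is to rewrite $\mathbb{E}[A_{ij}]$ from~(3) in the matching form $\mathbb{E}[A_{ij}] = (2/\pi)\,\mathbb{E}[g(\theta_i)]$. The natural way is to split $g$ into its two summands: the first summand integrated against $f(\theta_i)\,d\theta_i$ already produces $(\pi/4)\,\mathbb{E}[A_{ij}]$ by~(3); for the second summand I would swap the roles of $\theta_i$ and $\theta_j$ (using Fubini and the fact that $f$ is a density on $[0,\pi/2]$) to show it equals the first summand. Adding the two halves yields the desired identity.

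With both quantities expressed through $g$, the inequality becomes
\begin{align*}
(\mathbb{E}[g(\theta_i)])^2 \;\le\; \mathbb{E}[g(\theta_i)^2],
\end{align*}
which is Jensen. To upgrade to strict inequality, I would differentiate under the integral sign to get $g'(\theta_i) = \int_{\theta_i}^{\pi/2} f(\theta_j)\,d\theta_j$, the survival function of $\theta$. Because $\theta$ is a continuous random variable on $[0,\pi/2]$, this survival function is strictly positive on an interval of positive $f$-measure, so $g$ is strictly increasing there and hence $g(\theta_i)$ is non-degenerate under $f$. This makes Jensen strict.

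For the second assertion, note that $A_{ij}$ and $A_{ik}$ share the same marginal distribution, so $\mathrm{Cov}(A_{ij},A_{ik}) = \mathbb{E}[A_{ij}A_{ik}] - (\mathbb{E}[A_{ij}])^2 > 0$, which precludes independence. The only delicate step is the symmetrization that puts $\mathbb{E}[A_{ij}]$ in the form $(2/\pi)\mathbb{E}[g(\theta_i)]$; once that algebraic identity is seen, the rest is Jensen plus a one-line computation of $g'$, and strict positivity of the tail integral handles the continuity hypothesis cleanly.
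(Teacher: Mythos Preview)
Your proposal is correct and follows essentially the same route as the paper: both introduce the same auxiliary $g$, use the symmetrization (Fubini) to get $\mathbb{E}[A_{ij}]=(2/\pi)\mathbb{E}[g(\theta_i)]$, and then reduce to $\operatorname{Var}(g(\theta_i))>0$. The only cosmetic difference is in the strictness step: the paper integrates by parts to write $g(\theta_i)=\theta_i-\int_0^{\theta_i}F$ and argues constancy forces $F\equiv 1$, while you differentiate directly to get $g'(\theta_i)=1-F(\theta_i)$ and invoke positivity of the survival function---these are the same computation read in opposite directions.
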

\begin{proof}
By symmetry,
\begin{align*}
    2\mathbb{E}[A_{ij}] &= \frac{4}{\pi}\int_{0}^{\frac{\pi}{2}}\int_{0}^{\theta_i} \theta_j  f(\theta_j)f(\theta_i)\;d\theta_j d\theta_i + \frac{4}{\pi}\int_{0}^{\frac{\pi}{2}}\int_{0}^{\theta_j} \theta_i f(\theta_i)f(\theta_j)\;d\theta_i d\theta_j\\
    &= \frac{4}{\pi}\int_{0}^{\frac{\pi}{2}}\left[\int_{0}^{\theta_i} \theta_j f(\theta_j) \;d\theta_j + \theta_i \int_{\theta_i}^{\frac{\pi}{2}} f(\theta_j)\; d\theta_j \right]f(\theta_i)\;d\theta_i.
\end{align*}
Suppose $\theta$ is a random variable with density $f$ and 
$g(\theta) = \int_{0}^{\theta} \theta_j f(\theta_j) \;d\theta_j + \theta \int_{\theta}^{\frac{\pi}{2}} f(\theta_j)\; d\theta_j$. Thus,
\begin{align*}
    (\mathbb{E}[A_{ij}])^2 = \frac{4}{\pi^2}(\mathbb{E}[g(\theta)])^2
\end{align*}
and
\begin{align*}
    \mathbb{E}[A_{ij}A_{ik}] = \frac{4}{\pi^2}\mathbb{E}[g(\theta)^2].
\end{align*}
Therefore, it is always the case that
\begin{align*}
    \frac{4}{\pi^2}\operatorname{Var}(g(\theta)) = \mathbb{E}[A_{ij}A_{ik}] - (\mathbb{E}[A_{ij}])^2 \geq 0
\end{align*}
and
\begin{align*}
    \mathbb{E}[A_{ij}A_{ik}] - (\mathbb{E}[A_{ij}])^2 = 0
\end{align*}
if and only if $g(\theta)$ is a constant. Suppose $F(\theta)$ is the cumulative distribution of $\theta$ and $F(0) = 0$, $F(\frac{\pi}{2}) = 1$. Since $f(\theta)$ is the density function, by integrating by parts,, we get
\begin{align*}
    g(\theta) &=  \theta_jF(\theta_j)\Big|_{0}^{\theta} - \int_{0}^{\theta} F(\theta_j)\;d\theta_j + \theta(1-F(\theta))\\
    &= \theta - \int_{0}^{\theta}F(\theta_j)\;d\theta_j
\end{align*}
Suppose $g(\theta)$ is a non-random constant $c$. Then, $\int_{0}^{\theta}1-F(\theta_j)\;d\theta_j = c$. The only solution for $F(\theta_j)$ would be $F(\theta_j)=1$, Lebesgue almost everywhere on $[0, \pi/2]$. Since $F$ is continuous as it has a density $f$, the previous statement implies that $F(\theta_j)=1$ everywhere. However, this contradicts the fact that $F(0)$ must be zero. Thus a contradiction arises and $g(\theta)$ cannot be a constant random variable if $f(\theta)$ is a continuous function. As a result, $(\mathbb{E}[A_{ij}])^2 < \mathbb{E}[A_{ij}A_{ik}]$ is $f(\theta)$ is a density distribution. Since $\operatorname{Cov}(A_{ij}A_{ik}) = \mathbb{E}[A_{ij}A_{ik}] - (\mathbb{E}[A_{ij}])^2 > 0$, $A_{ij}$ and $A_{ik}$ are dependent.
\end{proof}

When we draw $n$ chords on the circle we can create
a graph whose vertices are any points of intersections between two chords
or an intersection of the chord and the circle. Then Euler’s characteristic \cite{euler1758elementa} for planar graphs tells us that
\begin{align*}
    V_n - E_n + F_n = 2,
\end{align*}
where after drawing $n$ chords, $V_n$ means the number of vertices; $E_n$ means the number of edges; $F_n$ means the number of regions. Since we only care about the regions inside the circle, we need to exclude the one region that is outside the circle. Therefore, we redefine $F_n$ to be the number of regions inside the circle and an updated formula would be 
\begin{align}
    V_n - E_n + F_n = 1.
\end{align}

For example, in Figure 2, $V_n = 11$, $E_n = 18$, $F_n = 8$. Now our target is to find the number of regions. Suppose $n$ i.i.d. random chords are drawn and $R_n$ intersections are created by chords only. Each vertex of the above graph corresponds to the intersection
of two chords, or to an intersection of a chord with the circle. By assumption there are $R_n$ vertices of the first type. Since each chord intersects the circle twice, and the probability for two random chords to intersect the circle at the same point is zero as the left endpoint has a continuous distribution on the circumference of the circle, there are 2n vertices of the latter type. Therefore,
\begin{align}
    V_n = R_n + 2n.
\end{align}

As for edges, the degree of the vertices created by the intersection of two chords is 4. The degree of the vertices created by the intersection one chord and the circle is 3. Every edge is adjacent to two vertices. Therefore,
\begin{align}
    E_n = \frac{4\cdot R_n + 3\cdot 2n}{2} = 2R_n + 3n.
\end{align}

Thus, bringing (6) and (7) to (5), we get
\begin{align*}
    F_n = R_n + n + 1.
\end{align*}

Recall by definition of $R_n$ and $A_{ij}$, $R_n = \sum_{i=1}^{n-1}\sum_{j=i+1}^{n} A_{ij}$. Thus, after expanding $R_n$, the number of regions can be written as
\begin{align}
    F_n = \sum_{i=1}^{n-1}\sum_{j=i+1}^{n} A_{ij} + n + 1.
\end{align}

With an easy observation of number of regions ($F_n$), we find that it is the summation of dependent Bernoulli random variables ($A_{ij}$). We first calculate its expected value and standard deviation. From (8), we get
\begin{align}
    \mathbb{E}[F_n] = \frac{1}{2}n(n-1)\mathbb{E}[A_{12}]+n+1 \sim \Theta(n^2).
\end{align}
From (1), (2) and Proposition 2.1, we get
\begin{align}
    \operatorname{Var}(R_n) &= \mathbb{E}[R_n^2]-(\mathbb{E}[R_n])^2 \nonumber\\
    &= \frac{1}{4}\left[2n(n-1)\mathbb{E}[A_{12}]+4n(n-1)(n-2)\mathbb{E}[A_{12}A_{13}] - n(n-1)(4n-6)(\mathbb{E}[A_{12}])^2\right] \sim \Theta(n^3).
\end{align}
Thus,
\begin{align}
    \sigma_n &= \sqrt{\operatorname{Var}(F_n)} = \sqrt{\operatorname{Var}(F_n-n-1)} = \sqrt{\operatorname{Var}(R_n)}\nonumber\\ 
    &= \frac{1}{2}\left[2n(n-1)\mathbb{E}[A_{12}]+4n(n-1)(n-2)\mathbb{E}[A_{12}A_{13}] - n(n-1)(4n-6)(\mathbb{E}[A_{12}])^2\right]^{\frac{1}{2}} \sim \Theta(n^{3/2}).
\end{align}
Before we get any deeper, recall in the simulation part we calculate the expectation and standard deviation of $F_n$ when $f(\theta) = \sin(\theta)$ and $n=100$. Let's do the calculation here. According to (3) and (4), $\mathbb{E}[A_{12}] = \frac{1}{2}$ and $\mathbb{E}[A_{12}A_{13}] = \frac{8}{3\pi^2}$. Then, we bring them into (9) and (11) and get
\begin{align}
    E[F_n] = \frac{100\cdot99}{2}\cdot\frac{1}{2} + 100 + 1 = 2576
\end{align}
\begin{align}
    \sigma_n = \frac{1}{2}\left[2\cdot100\cdot99\cdot\frac{1}{2} + 4\cdot100\cdot99\cdot98\cdot\frac{8}{3\pi^2} - 100\cdot99\cdot394\cdot\frac{1}{4} \right]^\frac{1}{2} = 144.3
\end{align}

With the expected value and standard deviation in (9) and (11), we can shift and scale the distribution to make it have a mean of 0 and a variance of 1 as mentioned in the introduction. Therefore, we would like to prove after shifting and scaling, the distribution would converges to a standard normal distribution.

To prove $F_n$ converges to normal distribution, we would want to show for all function $h$ such that $||h||$ and $||h'||$ are finite,
\begin{align*}
    \left|\mathbb{E}\left[h\left(\frac{F_n - \mathbb{E}(F_n)}{\sigma_n}\right) \right] - \Phi h \right|
\end{align*}
is small when $n$ is large. Stein's method is first developed by Stein in his paper \cite{stein1972bound} and explained in detail in \cite{ross2011fundamentals}. We will apply the method to our problem by borrowing the following Lemma 2.2 from \cite{rinott2000normal} on page 21 Theorem 2.1.
\begin{lemma}
Suppose $Y_1,Y_2,...,Y_n$ are random variables whose expected value is 0 and $\operatorname{Var}[\sum_{i=1}^{n} Y_i] = \sigma$. Let $M_i$ denote the dependent neighborhood of $Y_i$, which means $Y_i$ and $Y_j$ are dependent if and only if $j\in M_i$. Then
\begin{align}
    \left|\mathbb{E}\left[h\left(\frac{\sum_{i=1}^{n} Y_i}{\sigma} \right)\right] - \Phi h\right| \leq \frac{2}{\sigma^2}||h||\sqrt{\mathbb{E}\left[\left(\sum_{i=1}^{n} \sum_{j\in M_i} (Y_iY_j - \mathbb{E}[Y_iY_j])\right)^2\right]} + \frac{1}{\sigma^3}||h'||\mathbb{E}\left[\sum_{i=1}^{n}|Y_i|(\sum_{j\in M_i}Y_j)^2 \right].
\end{align}
\end{lemma}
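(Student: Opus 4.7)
The plan is the classical Stein's-method argument for sums with local dependence. I would begin by introducing the Stein equation for the standard normal: given $h$ with $\|h\|$ and $\|h'\|$ finite, there is a solution $f = f_h$ of $f'(w) - w f(w) = h(w) - \Phi h$ satisfying the standard Stein bounds $\|f_h'\| \le 2\|h\|$ and $\|f_h''\| \le 2\|h'\|$. Writing $W = \sigma^{-1}\sum_{i=1}^{n}Y_i$, evaluating at $W$, and taking expectation produces the master identity $\mathbb{E}[h(W)] - \Phi h = \mathbb{E}[f_h'(W) - W f_h(W)]$, reducing the problem to bounding the right-hand side.

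To exploit the local dependence, for each $i$ I would split $W = W^{(i)} + W_i$, where $W_i = \sigma^{-1}\sum_{j\in M_i}Y_j$ and $W^{(i)} = W - W_i$. By assumption $Y_i$ is independent of $W^{(i)}$, and $\mathbb{E}[Y_i] = 0$ gives $\mathbb{E}[Y_i f_h(W^{(i)})] = 0$. Taylor expansion
\[
f_h(W) = f_h(W^{(i)}) + W_i f_h'(W^{(i)}) + R_i,\qquad |R_i| \le \tfrac{1}{2}W_i^2\|f_h''\|,
\]
together with summation over $i$, yields
\[
\mathbb{E}[W f_h(W)] = \sigma^{-2}\sum_i\sum_{j\in M_i}\mathbb{E}\bigl[Y_iY_j\,f_h'(W^{(i)})\bigr] + \mathcal{E}_1,
\]
with $|\mathcal{E}_1| \le \tfrac{\|f_h''\|}{2\sigma^3}\mathbb{E}\bigl[\sum_i|Y_i|(\sum_{j\in M_i}Y_j)^2\bigr]$, which already matches the second summand of (14) after applying $\|f_h''\| \le 2\|h'\|$.

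Next I would introduce $T := \sigma^{-2}\sum_i\sum_{j\in M_i}Y_iY_j$. Since $Y_i$ and $Y_j$ are independent whenever $j\notin M_i$ (and hence have zero covariance), the variance identity forces $\mathbb{E}[T] = 1$. Replacing $f_h'(W^{(i)})$ by $f_h'(W)$ (with another Taylor error of the same form) and combining the expansions gives
\[
\mathbb{E}[f_h'(W) - Wf_h(W)] = \mathbb{E}\bigl[f_h'(W)(1-T)\bigr] + (\text{Taylor remainders}),
\]
and Cauchy--Schwarz yields $|\mathbb{E}[f_h'(W)(1-T)]| \le \|f_h'\|\sqrt{\operatorname{Var}(T)}$. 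Computing $\operatorname{Var}(T) = \sigma^{-4}\mathbb{E}[(\sum_i\sum_{j\in M_i}(Y_iY_j - \mathbb{E}[Y_iY_j]))^2]$ and plugging in $\|f_h'\| \le 2\|h\|$ produces the first summand of (14).

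The main obstacle will be carefully pooling all the Taylor-remainder terms --- from expanding $f_h(W)$ around $W^{(i)}$ and from replacing $f_h'(W^{(i)})$ by $f_h'(W)$ --- and showing that they collapse cleanly into the single $\sigma^{-3}\|h'\|\,\mathbb{E}[\sum_i|Y_i|(\sum_{j\in M_i}Y_j)^2]$ term of (14) with the stated constant. The other delicate point is deploying the Stein bounds on $f_h$ precisely enough that the two residuals split into one piece inheriting $\|h\|$ with a square root and one piece inheriting $\|h'\|$ without a square root, matching the structure on the right-hand side of (14).
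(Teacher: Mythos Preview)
The paper does not actually prove this lemma: it is quoted verbatim from \cite{rinott2000normal} (their Theorem~2.1) and used as a black box. So there is no ``paper's own proof'' to compare against.

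That said, your outline is the standard Stein's-method argument that underlies the cited result, and it is structurally sound: solve the Stein equation, decompose $W$ via the dependency neighborhoods $M_i$, exploit $\mathbb{E}[Y_i f_h(W^{(i)})]=0$, Taylor-expand, and collect a variance-type term (controlled by $\|f_h'\|$, hence $\|h\|$) and a cubic remainder (controlled by $\|f_h''\|$, hence $\|h'\|$). The one place to be careful, as you already flag, is the bookkeeping of constants: the naive two-step expansion (first expand $f_h(W)$ about $W^{(i)}$, then replace $f_h'(W^{(i)})$ by $f_h'(W)$) tends to produce a combined remainder with coefficient $\tfrac{3}{2}\|f_h''\|$ rather than $\tfrac{1}{2}\|f_h''\|$, which after $\|f_h''\|\le 2\|h'\|$ would give $3$ instead of the stated $1$ in front of the $\sigma^{-3}$ term. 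Getting down to the constant $1$ requires either the sharper Stein bounds or the integral-remainder form of Taylor's theorem applied in a single pass rather than two separate expansions; this is exactly the refinement carried out in the Rinott--Rotar reference. Since the paper treats the lemma as imported, your sketch is already more than what the paper itself supplies.
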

Now, we can prove the following theorem for our problem.

\begin{theorem}
For $n >5$,
\begin{align*}
    \left|\mathbb{E}\left[h\left(\frac{F_n - \mathbb{E}(F_n)}{\sigma_n}\right) \right] - \Phi h \right| \leq \frac{6n^{5/2}}{\sigma_n^2}||h|| + \frac{2n^4}{\sigma_n^3}||h'|| \sim \Theta(n^{-1/2})
\end{align*}
\end{theorem}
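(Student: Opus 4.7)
The plan is to apply Lemma 2.2 (Rinott's bound) directly to the centered indicators $Y_{ij} = A_{ij} - \mathbb{E}[A_{ij}]$, indexed by unordered pairs $\{i,j\}$ with $1 \leq i < j \leq n$. By equation (8) we have $F_n - \mathbb{E}[F_n] = \sum_{i<j} Y_{ij}$, and the variance of this sum is precisely $\sigma_n^2$ by (11), so the left-hand side in Lemma 2.2 matches the left-hand side of the theorem. The dependence structure is transparent: $A_{ij}$ is a function of chords $i$ and $j$ only, so $Y_{ij}$ is independent of $Y_{kl}$ whenever $\{i,j\} \cap \{k,l\} = \emptyset$. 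Hence the dependent neighborhood of $\{i,j\}$ can be taken as $M_{(i,j)} = \{ \{k,l\} : \{k,l\} \cap \{i,j\} \neq \emptyset \}$, which has cardinality at most $2(n-2) + 1 = 2n-3$.

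The second term in the Rinott bound is the easy one. Since $|Y_{ij}| \leq 1$ (as $A_{ij}$ is Bernoulli), I bound $\bigl(\sum_{(k,l) \in M_{(i,j)}} Y_{kl}\bigr)^2 \leq (2n-3)^2 \leq 4n^2$, and summing over the at most $\binom{n}{2} \leq n^2/2$ pairs yields the bound $2n^4$. This gives exactly the $\frac{2n^4}{\sigma_n^3}\|h'\|$ contribution claimed by the theorem, with no further work.

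The first term in the Rinott bound requires controlling
\[
    \mathbb{E}\!\left[S^2\right], \qquad S = \sum_{(i,j)} \sum_{(k,l) \in M_{(i,j)}} \xi_{(i,j),(k,l)},
\]
where $\xi_{(i,j),(k,l)} = Y_{ij}Y_{kl} - \mathbb{E}[Y_{ij}Y_{kl}]$ is mean zero and bounded by $2$ in absolute value. Expanding the square into a double sum of covariances, a cross-term $\mathbb{E}[\xi_{(i,j),(k,l)} \xi_{(p,q),(r,s)}]$ vanishes whenever the index sets $\{i,j,k,l\}$ and $\{p,q,r,s\}$ are disjoint (independence of the underlying chords). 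So the plan is a combinatorial counting argument: for each of the at most $\binom{n}{2}(2n-3)$ choices of the first index pair, I count the number of second index pairs $((p,q),(r,s))$ that (i) satisfy $(r,s) \in M_{(p,q)}$ and (ii) share at least one chord index with the first tuple. Since $|\{i,j,k,l\}| \leq 4$, any such $(p,q)$ or $(r,s)$ has at most $4(n-1)$ choices, and after fixing one, the other has at most $2n-3$ choices, giving an $O(n^2)$ count per first tuple and an $O(n^5)$ total. With each covariance bounded by $4$, I arrive at $\mathbb{E}[S^2] \leq 9n^5$ (after a careful accounting of the constants), so $\sqrt{\mathbb{E}[S^2]} \leq 3n^{5/2}$, which plugged into Rinott's bound yields the $\frac{6n^{5/2}}{\sigma_n^2}\|h\|$ term.

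The main obstacle is not conceptual but bookkeeping: getting the constant $3$ in $\sqrt{\mathbb{E}[S^2]} \leq 3n^{5/2}$ cleanly (so that the final constant becomes exactly $6$) requires splitting the overlap count into cases based on which of $(p,q),(r,s)$ carries the shared index and avoiding double counting, together with the hypothesis $n>5$ used to absorb lower-order terms like $(2n-3)^2$ into clean powers of $n$. Once that is done, combining both Rinott terms gives the stated inequality, and the $\Theta(n^{-1/2})$ rate follows from $\sigma_n \sim \Theta(n^{3/2})$ established in (11): the first term is $\Theta(n^{5/2}/n^3) = \Theta(n^{-1/2})$ and the second is $\Theta(n^4/n^{9/2}) = \Theta(n^{-1/2})$.
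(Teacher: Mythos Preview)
Your approach is essentially identical to the paper's: apply Lemma 2.2 to the centered indicators $Y_{i,j}=A_{ij}-\mathbb{E}[A_{ij}]$, bound the second Rinott term trivially via $|Y_{i,j}|\le 1$ and $|M_{\{i,j\}}|=2n-3$, and for the first term expand the square and discard all cross-terms whose chord-index sets are disjoint. The only point to tighten is the constant in the first term. Bounding $|\xi_{(i,j),(k,l)}|\le 2$ and hence each surviving covariance by~$4$ will \emph{not} yield $\mathbb{E}[S^2]\le 9n^5$: the number of overlapping pairs already has leading term $9n^5$, so with a factor $4$ you would land at $36n^5$ and a final constant $12$ rather than $6$. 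The paper gets the sharper bound by using $|Y_{i,j}Y_{k,l}|\le 1$ directly (so each surviving covariance is at most $1$, not $4$), then symmetrizing the second sum to the two representative cases $Y_{1,2}Y_{1,3}$ and $Y_{1,2}^{2}$ and counting the tuples $((i,j),(k,l))$ with $\{i,j,k,l\}\cap\{1,2,3\}\neq\emptyset$ (respectively $\cap\{1,2\}\neq\emptyset$); this exact count gives the polynomial $9n^5-66n^4+\cdots$, which is $\le 9n^5$ precisely for $n>5$.
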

\begin{proof}

To apply (14) on our problem, we first need to reduce the expected value of random variables to 0. Suppose,
\begin{align*}
    Y_{i,j} = A_{ij}-\mathbb{E}[A_{ij}].
\end{align*}
We denote $M_{\{i,j\}}$ to be the dependent neighborhood for $Y_{i,j}$. According to \cite{solomon1978geometric} page 135, $A_{ij}$ and $A_{kl}$ are independent if $\{i,j\}\cap{\{k,l\}}=\emptyset$. And according to Proposition 2.1, $A_{ij}$ and $A_{kl}$ are dependent if $\{i,j\}\cap{\{k,l\}}\neq \emptyset$.
We have
\begin{align}
    {\{k,l\}} \notin M_{\{i,j\}} \text{ if and only if $\{i,j\}\cap{\{k,l\}}=\emptyset$}.
\end{align}

According to (8) and (14), we get
\begin{align}
    \left|\mathbb{E}\left[h\left(\frac{F_n - \mathbb{E}(F_n)}{\sigma_n} \right)\right] - \Phi h\right| &= \left|\mathbb{E}\left[h\left(\frac{\sum_{i=1}^{n-1}\sum_{j=i+1}^{n} Y_{i,j}}{\sigma_n} \right)\right] - \Phi h\right|\nonumber\\
    &\leq \frac{2}{\sigma_n^2}||h||\sqrt{\mathbb{E}\left[\left(\sum_{i=1}^{{n-1}}\sum_{j=i+1}^{n}\sum_{{\{k,l\}} \in M_{\{i,j\}}} (Y_{i,j}Y_{k,l}-\mathbb{E}[Y_{i,j}Y_{k,l}])\right)^2\right]}\nonumber \\
    &+ \frac{1}{\sigma_n^3}||h'||\mathbb{E}\left[\sum_{i=1}^{{n-1}}\sum_{j=i+1}^{n} |Y_{i,j}|\left(\sum_{{\{k,l\}} \in M_{\{i,j\}}} Y_{k,l}\right)^2 \right].
\end{align}

Therefore, our next step is to bound the right hand side of (16). We start with the first part,

\begin{align*}
&\mathbb{E}\left[\left(\sum_{i=1}^{{n-1}}\sum_{j=i+1}^{n}\sum_{{\{k,l\}} \in M_{\{i,j\}}} Y_{i,j}Y_{k,l}-\mathbb{E}[Y_{i,j}Y_{k,l}]\right)^2\right] \nonumber\\
&=\mathbb{E}\left[\sum_{i=1}^{{n-1}}\sum_{j=i+1}^{n}\sum_{{\{k,l\}} \in M_{\{i,j\}}} \sum_{i'=1}^{{n-1}}\sum_{j'=i+1}^{n}\sum_{\{k',l'\} \in M_{\{i',j'\}}} (Y_{i,j}Y_{k,l}-\mathbb{E}[Y_{i,j}Y_{k,l}])(Y_{i',j'}Y_{k',l'}-\mathbb{E}[Y_{i',j'}Y_{k',l'}])\right]
\end{align*}

The number of terms in $\sum_{i'=1}^{{n-1}}\sum_{j'=i+1}^{n}\sum_{\{k',l'\} \in M_{\{i',j'\}}}$ that $|\{i',j'\}\cap\{k',l'\}|=1$ is $\frac{n(n-1)}{2}\cdot 2(n-2)$. And the number of elements that $|\{i',j'\}\cap\{k',l'\}|=2$ is $\frac{n(n-1)}{2}$. When $|\{i',j'\}\cap\{k',l'\}|=0$, $[k',l'] \notin M_{\{i',j'\}}$ and we do not need to consider such case because of (15). By symmetry we can assume $Y_{i',j'}Y_{k',l'}$ to be $Y_{1,2}Y_{1,3}$ when $|\{i',j'\}\cap\{k',l'\}|=1$ and $Y_{i',j'}Y_{k',l'}$ to be $Y_{1,2}^2$ when $|\{i',j'\}\cap\{k',l'\}|=2$. Thus,
\begin{align*}
&\mathbb{E}\left[\left(\sum_{i=1}^{{n-1}}\sum_{j=i+1}^{n}\sum_{{\{k,l\}} \in M_{\{i,j\}}} Y_{i,j}Y_{k,l}-\mathbb{E}[Y_{i,j}Y_{k,l}]\right)^2\right] \nonumber\\
&=\frac{n(n-1)}{2}\cdot 2(n-2) \mathbb{E}\left[\sum_{i=1}^{{n-1}}\sum_{j=i+1}^{n}\sum_{{\{k,l\}} \in M_{\{i,j\}}} (Y_{i,j}Y_{k,l}-\mathbb{E}[Y_{i,j}Y_{k,l}])(Y_{1,2}Y_{1,3}-\mathbb{E}[Y_{1,2}Y_{1,3}])\right]\nonumber\\
&+ \frac{n(n-1)}{2}\cdot \mathbb{E}\left[\sum_{i=1}^{{n-1}}\sum_{j=i+1}^{n}\sum_{{\{k,l\}} \in M_{\{i,j\}}} (Y_{i,j}Y_{k,l}-\mathbb{E}[Y_{i,j}Y_{k,l}])(Y_{1,2}^2-\mathbb{E}[Y_{1,2}^2])\right].
\end{align*}

Note that $Y_{i,j}Y_{k,l}-\mathbb{E}[Y_{i,j}Y_{k,l}]$ and $Y_{i',j'}Y_{k',l'}-\mathbb{E}[Y_{i',j'}Y_{k',l'}]$ are independent if $\{i,j,k,l\}\cap\{i',j',k',l'\} = \emptyset$ as every chord is independent of each other. In such case, we can break the multiplication inside $\mathbb{E}[\cdot]$ apart and get
\begin{align*}
    \mathbb{E}[(Y_{i,j}Y_{k,l}-\mathbb{E}[Y_{i,j}Y_{k,l}])(Y_{1,2}Y_{1,3}-\mathbb{E}[Y_{1,2}Y_{1,3}])] 
    &= \mathbb{E}[Y_{i,j}Y_{k,l}-\mathbb{E}[Y_{i,j}Y_{k,l}]] \cdot \mathbb{E}[Y_{1,2}Y_{1,3}-\mathbb{E}[Y_{1,2}Y_{1,3}]] \nonumber\\
    &= 0 \text{ if } \{i,j,k,l\}\cap\{1,2,3\}=\emptyset.
\end{align*}

\begin{align*}
    \mathbb{E}[(Y_{i,j}Y_{k,l}-\mathbb{E}[Y_{i,j}Y_{k,l}])(Y_{1,2}^2-\mathbb{E}[Y_{1,2}^2])] 
    &= \mathbb{E}[Y_{i,j}Y_{k,l}-\mathbb{E}[Y_{i,j}Y_{k,l}]] \cdot \mathbb{E}[Y_{1,2}^2-\mathbb{E}[Y_{1,2}^2]] \nonumber\\
    &= 0 \text{ if } \{i,j,k,l\}\cap\{1,2\}=\emptyset.
\end{align*}

Thus, let $I(\cdot)$ be the indicator function of event $\cdot$,
\begin{align*}
    &\mathbb{E}\left[\left(\sum_{i=1}^{{n-1}}\sum_{j=i+1}^{n}\sum_{{\{k,l\}} \in M_{\{i,j\}}} Y_{i,j}Y_{k,l}-\mathbb{E}[Y_{i,j}Y_{k,l}]\right)^2\right] \nonumber\\
    &\leq \frac{n(n-1)}{2}\cdot 2(n-2) \mathbb{E}\left[\sum_{i=1}^{{n-1}}\sum_{j=i+1}^{n}\sum_{{\{k,l\}} \in M_{\{i,j\}}} I(\{i,j,k,l\} \cap \{1,2,3\} \neq \emptyset) \right]\nonumber\\
    &+ \frac{n(n-1)}{2}\cdot \mathbb{E}\left[\sum_{i=1}^{{n-1}}\sum_{j=i+1}^{n}\sum_{{\{k,l\}} \in M_{\{i,j\}}}I(\{i,j,k,l\} \cap \{1,2\} \neq \emptyset)\right]\nonumber\\
    &= n(n-1)(n-2)\cdot (9n^2-24n) + \frac{n(n-1)}{2}\cdot (6n^2-10n-3)\nonumber\\
    &= 9n^5-48n^4+82n^3-\frac{89n^2}{2}+\frac{3n}{2}.
\end{align*}
Since
\begin{align*}
    -48n^4+82n^3-\frac{89n^2}{2}+\frac{3n}{2}&= -(48n^4-82n^3+15n^2) - (\frac{59n^2}{2}-\frac{3n}{2})\\
    &= -n^2(2n - 3)(24n - 5) - \frac{n}{2}(59n - 3)\\
    &< 0 \text{ if } n>5,
\end{align*}
we have
\begin{align*}
    &\mathbb{E}\left[\left(\sum_{i=1}^{{n-1}}\sum_{j=i+1}^{n}\sum_{{\{k,l\}} \in M_{\{i,j\}}} Y_{i,j}Y_{k,l}-\mathbb{E}[Y_{i,j}Y_{k,l}]\right)^2\right] \leq 9n^5 \text{ for all $n>5$}.
\end{align*}

Bring it back to (16). For all $n>5$,
\begin{align*}
    \frac{2}{\sigma_n^2}||h||\sqrt{\mathbb{E}\left[\left(\sum_{i=1}^{{n-1}}\sum_{j=i+1}^{n}\sum_{{\{k,l\}} \in M_{\{i,j\}}} (Y_{i,j}Y_{k,l}-\mathbb{E}[Y_{i,j}Y_{k,l}])\right)^2\right]}
    &\leq \frac{2}{\sigma_n^2}||h||\sqrt{9n^5}\nonumber\\
    &= \frac{6n^{5/2}}{\sigma_n^2}||h||.
\end{align*}

As for the second part of (16),
\begin{align*}
    \frac{1}{\sigma_n^3}||h'||\mathbb{E}\left\{\sum_{i=1}^{{n-1}}\sum_{j=i+1}^{n} |Y_{i,j}|\left(\sum_{{\{k,l\}} \in M_{\{i,j\}}} Y_{k,l}\right)^2 \right\}
    &\leq \frac{1}{\sigma_n^3}||h'||\mathbb{E}\left\{\sum_{i=1}^{{n-1}}\sum_{j=i+1}^{n} 1\left(\sum_{{\{k,l\}} \in M_{\{i,j\}}} 1\right)^2 \right\}\nonumber\\
    &= \frac{1}{\sigma_n^3}||h'||\left(\frac{n(n-1)}{2}(2n-3)^2\right)\nonumber\\
    &= \frac{1}{\sigma_n^3}||h'||\left(2n^4-8n^3+\frac{21n^2}{2}-\frac{9n}{2}\right).
\end{align*}

Since $-8n^3+\frac{21n^2}{2}-\frac{9n}{2} = -\frac{n}{2}(16n - 5)(n - 1) - 2n < 0$ for all $n>5$,

\begin{align*}
    \frac{1}{\sigma_n^3}||h'||\mathbb{E}\left\{\sum_{i=1}^{{n-1}}\sum_{j=i+1}^{n} |Y_{i,j}|\left(\sum_{{\{k,l\}} \in M_{\{i,j\}}} Y_{k,l}\right)^2 \right\} \leq \frac{2n^4}{\sigma_n^3}||h'||.
\end{align*}

Since $\sigma_n \sim \Theta(n^{3/2})$, from (16) we get
\begin{align*}
    \left|\mathbb{E}\left[h\left(\frac{F_n - \mathbb{E}(F_n)}{\sigma_n}\right) \right] - \Phi h \right| \leq \frac{6n^{5/2}}{\sigma_n^2}||h|| + \frac{2n^4}{\sigma_n^3}||h'|| \sim \Theta(n^{-1/2}).
\end{align*}
\end{proof}

Another way to bound the difference between the distribution of $F_n$ and the normal distribution is to measure the difference of c.d.f between them. In other word,
\begin{align}
    \left|P\left(\frac{F_n - \mathbb{E}[F_n]}{\sigma_n} < w \right) - \Phi(w)\right|
\end{align}
needs to be small when $n$ is large. To bound this term, we need to use the following Lemma 2.4, which restates Theorem 2.2 from \cite{rinott2000normal}. The detailed proof can be found in \cite{dembo1996some}.

\begin{lemma}
Let $Y_1,... ,Y_n$ be random variables satisfying $|Y_i - \mathbb{E}[Y_i]| \leq B$ a.s., $i = 1, . . . , n$ $\mathbb{E}[\sum_{i=1}^{n} Y_i] = \lambda$, $\operatorname{Var}[\sum_{i=1}^{n} Y_i] = \sigma^2 > 0$ and $\frac{1}{n}\mathbb{E}\left[\sum_{i=1}^{n} |Y_i - \mathbb{E}[Y_i]|\right] = \mu $.  Let $M_i \subset \{1,... ,n\}$ be such that $j \in M_i$ if and only if $i \in M_j$ and $(Y_i, Y_j )$ is independent of $\{Y_k\}_{k \notin M_i \cup M_j}$ for $i, j =
1,... ,n$ , and set $D = max_{1\leq i \leq n} |M_i|$. Then

\begin{align*}
    \left|P\left(\frac{\sum_{i=1}^{n}Y_i - \lambda}{\sigma} \leq w \right) - \Phi(w) \right| \leq 7\frac{n\mu}{\sigma^3}(DB)^2.
\end{align*}
\end{lemma}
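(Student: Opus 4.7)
The plan is to adapt Stein's method for normal approximation to the Kolmogorov distance. The new difficulty compared with the smooth test function bound already handled in Theorem 2.3 is that the target $h_w(x) = \mathbf{1}\{x \leq w\}$ is not differentiable. Set $W = (\sum_{i=1}^n Y_i - \lambda)/\sigma$ and $\tilde Y_i = (Y_i - \mathbb{E}[Y_i])/\sigma$; then $P(W\leq w) - \Phi(w) = \mathbb{E}[f_w'(W) - W f_w(W)]$, where $f_w$ is the bounded Stein solution to $f_w'(x) - x f_w(x) = \mathbf{1}\{x \leq w\} - \Phi(w)$, and classical estimates give $\|f_w\|_\infty$ and $\|f_w'\|_\infty$ bounded by absolute constants. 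The whole proof reduces to controlling this expectation.

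The next step is to decompose using the local dependence. With $W_i := W - \sum_{j\in M_i}\tilde Y_j$, the hypothesis that $(Y_i,Y_j)$ is independent of $\{Y_k\}_{k\notin M_i\cup M_j}$, combined with the symmetry $j\in M_i \iff i\in M_j$, implies that $\tilde Y_i$ is independent of $W_i$, so $\mathbb{E}[\tilde Y_i f_w(W_i)] = 0$. Using the normalization $\operatorname{Var}(W) = \sum_i \sum_{j\in M_i}\mathbb{E}[\tilde Y_i \tilde Y_j] = 1$ to rewrite $\mathbb{E}[f_w'(W)]$ in the same double-sum form as $\mathbb{E}[Wf_w(W)] = \sum_i\mathbb{E}[\tilde Y_i(f_w(W)-f_w(W_i))]$, the problem reduces to bounding terms of the shape $\mathbb{E}\bigl[\tilde Y_i\bigl(f_w(W) - f_w(W_i) - f_w'(W)\sum_{j\in M_i}\tilde Y_j\bigr)\bigr]$, which a classical Taylor expansion would control via $\|f_w''\|_\infty$.

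Because $f_w'$ has a jump at $w$, that Taylor step is illegal. The standard remedy of Dembo–Rinott is a smoothing argument: replace $h_w$ by a piecewise-linear indicator $h_{w,\varepsilon}$ with Lipschitz constant $1/\varepsilon$, run the expansion above for the smoothed Stein solution at a cost of order $1/\varepsilon$ per Taylor difference, and absorb the error $|\mathbb{E}h_w(W) - \mathbb{E}h_{w,\varepsilon}(W)|$ using a concentration-type estimate of the form $P(w-\varepsilon \leq W \leq w) \leq c(\varepsilon + DB)$. Optimizing in $\varepsilon$ produces the $(DB)^2$ factor, while the $n\mu/\sigma^3$ factor emerges from summing $\mathbb{E}|\tilde Y_i|$ over $i$ and tracking the $1/\sigma$ scalings introduced in the standardization.

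The hardest step is the concentration inequality for locally dependent sums, i.e.\ an anti-concentration bound $P(a \leq W \leq b) \leq c((b-a) + DB)$ that uses only the local dependence structure, the almost-sure bound $|Y_i - \mathbb{E}[Y_i]| \leq B$, and $\operatorname{Var}(W)=1$. Dembo and Rinott establish it by a separate Stein-type argument applied to a truncated linear test function, and it is this sub-argument, together with the smoothing balance, that fixes the explicit numerical constant $7$ in the final estimate. With the concentration inequality granted, the local-dependence algebra sketched above combines with the smoothing calculation to give exactly $7 n\mu (DB)^2/\sigma^3$.
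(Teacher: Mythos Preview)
The paper does not give its own proof of this lemma: it is simply quoted from \cite{rinott2000normal}, Theorem~2.2, with the full argument deferred to \cite{dembo1996some}. Your sketch (Stein equation for $h_w$, local-dependence decomposition via $W_i = W - \sum_{j\in M_i}\tilde Y_j$, smoothing of the indicator, and a separate anti-concentration step) is a faithful outline of the Dembo--Rinott proof the paper points to, so there is nothing in the paper itself to compare against beyond that citation.
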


The definition of $M_i$ might be ambiguous in lemma 2.4, but it basically means the set of random variables in $Y_1,...,Y_n$ that is dependent of $Y_i$. With the help of this inequality, we can bound (17) in the following theorem.

\begin{theorem}
\begin{align*}
    \left|P\left(\frac{F_n - \mathbb{E}[F_n]}{\sigma_n} < w \right) - \Phi(w)\right| \leq 14\frac{n^4}{\sigma_n^3} \sim \Theta(n^{-1/2})
\end{align*}
\end{theorem}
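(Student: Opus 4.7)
The plan is to apply Lemma 2.4 directly to the reindexed sum $F_n - \mathbb{E}[F_n] = \sum_{1\le i<j\le n} Y_{i,j}$, where $Y_{i,j} = A_{ij} - \mathbb{E}[A_{ij}]$. Since the constant $n+1$ shift does not affect the centered and scaled statistic, the problem reduces to a Berry--Esseen-type estimate for $\sum Y_{i,j}$. The "$n$" appearing in Lemma 2.4 corresponds here to the number of summands $N = \binom{n}{2} = n(n-1)/2$, and the neighborhoods $M_{\{i,j\}}$ are exactly those identified in the proof of Theorem 2.3 via Proposition 2.1 and the independence result on page 135 of \cite{solomon1978geometric}.

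First I will check the four hypotheses of Lemma 2.4. Since $A_{ij}\in\{0,1\}$, we have $|Y_{i,j}| \leq 1$ almost surely, giving $B=1$, and also $\mu = \frac{1}{N}\mathbb{E}\bigl[\sum |Y_{i,j}|\bigr] \leq 1$. The neighborhood $M_{\{i,j\}}$ consists of pairs $\{k,l\}\neq \{i,j\}$ with $\{i,j\}\cap\{k,l\}\neq\emptyset$; there are $2(n-2)$ such pairs, so $D \leq 2(n-2) \leq 2n$. The symmetry $\{k,l\}\in M_{\{i,j\}} \Leftrightarrow \{i,j\}\in M_{\{k,l\}}$ is immediate from the definition, and the independence property $(Y_{i,j},Y_{k,l}) \perp \{Y_{p,q}\}_{\{p,q\}\notin M_{\{i,j\}}\cup M_{\{k,l\}}}$ follows because each chord is drawn independently: a pair $\{p,q\}$ outside both neighborhoods involves only chord indices disjoint from $\{i,j,k,l\}$, and thus $Y_{p,q}$ is a function of a disjoint set of independent chords.

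Next I plug the bounds into Lemma 2.4. The right-hand side becomes
\begin{align*}
    7 \cdot N \cdot \mu \cdot (DB)^2 \cdot \sigma_n^{-3} \;\leq\; 7 \cdot \frac{n(n-1)}{2} \cdot 1 \cdot \bigl(2(n-2)\bigr)^2 \cdot \sigma_n^{-3} \;=\; \frac{14\, n(n-1)(n-2)^2}{\sigma_n^3} \;\leq\; \frac{14\, n^4}{\sigma_n^3},
\end{align*}
which yields the claimed bound. The $\Theta(n^{-1/2})$ asymptotic then follows by combining with $\sigma_n \sim \Theta(n^{3/2})$ from (11), since $n^4/n^{9/2} = n^{-1/2}$.

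I do not expect any real obstacle here: most of the conceptual work has already been done in Theorem 2.3 (identifying the local dependence structure) and in equations (9)--(11) (pinning down $\sigma_n$ up to constants). The only step that requires slight care is verifying the conditional independence hypothesis of Lemma 2.4 precisely — but this is essentially tautological in our i.i.d.\ chords model, since any $Y_{p,q}$ indexed by a pair disjoint from $\{i,j\}\cup\{k,l\}$ depends only on chords independent of those determining $Y_{i,j}$ and $Y_{k,l}$. Everything else is bookkeeping with the constants $B$, $D$, $\mu$, and $N$.
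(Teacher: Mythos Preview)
Your proposal is correct and follows the same argument as the paper: apply Lemma~2.4 to the $\binom{n}{2}$ centered indicators with $B\le 1$, $\mu\le 1$, $D\le 2n$, and simplify. One small correction: the dependency neighborhood $M_{\{i,j\}}$ must include $\{i,j\}$ itself (so $D=2n-3$, as the paper has it, rather than your $2(n-2)$), since otherwise the independence hypothesis of Lemma~2.4 fails when the two indices coincide; this does not change the final bound because both values are at most $2n$.
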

\begin{proof}
For our problem, after eliminating the constant, the target random variable is $\sum_{i=1}^{{n-1}}\sum_{j=i+1}^{n} A_{ij}$ according to (8). $\mathbb{E}[F_n]$ can be found in (9). The standard deviation $\sigma = \sigma_n$ according to (11). The total number of random variables is ${n\choose 2} = \frac{n(n-1)}{2}$. Since $A_{ij}$ is Bernoulli random variable, $|A_{ij}-\mathbb{E}[A_{ij}]| \leq 1$. Therefore, $B\leq 1$ and
\begin{align*}
    \mu &= \frac{2}{n(n-1)}\mathbb{E}\left[\sum_{i=1}^{{n-1}}\sum_{j=i+1}^{n} |A_{ij} - \mathbb{E}[A_{ij}]|\right]\\
    &\leq  \frac{2}{n(n-1)}\mathbb{E}\left[\sum_{i=1}^{{n-1}}\sum_{j=i+1}^{n} 1\right] = 1.
\end{align*}

$A_{ij}$ is dependent on $A_{kl}$ if and only if $|\{i,j\}\cap{\{k,l\}}|=1$ or $2$,  so $D = 2n-3$. Finally, we can bring everything into Lemma 2.4.
\begin{align*}
    \left|P\left(\frac{F_n - \mathbb{E}[F_n]}{\sigma_n} < \epsilon \right) - \Phi(\epsilon)\right| 
    &\leq 7\frac{\frac{n(n-1)}{2} \cdot 1}{\sigma_n^3} ((2n-3)\cdot1)^2\\
    &\leq 7\frac{n^2}{2\sigma_n^3}(2n)^2\\
    &= 14\frac{n^4}{\sigma_n^3}.
\end{align*}
Since $\sigma_n \sim \Theta(n^{3/2})$,
\begin{align*}
    \left|P\left(\frac{F_n - \mathbb{E}[F_n]}{\sigma_n} < \epsilon \right) - \Phi(\epsilon)\right| \leq 14\frac{n^4}{\sigma_n^3} \sim \Theta(n^{-1/2}).
\end{align*}
\end{proof}

Note that the random chords for our problem have a uniformly distributed left endpoint on the circumference of the circle. The reason for this condition is that there is no convenient formula to calculate $E[A_{ij}]$ and $E[A_{ij}A_{ik}]$ if the distribution of left endpoint is nonuniform or dependent of distance distribution $f(\theta)$. Thus, we cannot calculate the expectation and the variance of $F_n$. However, we can still prove the following corollary using the same reasoning.
\begin{theorem}
Suppose there are $n$ i.i.d. random chords in the circle. For every chord, the left endpoint may not uniformly distributed on the circumference of the circle and may be dependent on the half angle distribution $f$. However, each chord is constructed in a way such that the probability for three or more points to create an intersection at the same point or two or more points create an intersection with the circle at the same point is zero. Suppose $N = 7\frac{n\mu}{\sigma_n^2}(DB)^2$ as in the RHS of the formula in Lemma 2.4. If $0<(\mathbb{E}[A_{12}])^2<\mathbb{E}[A_{12}A_{13}]<1$, 
\begin{align*}
    \left|P\left(\frac{F_n - \mathbb{E}[F_n]}{\sigma_n} < w \right) - \Phi(w)\right| \leq N \sim \Theta(n^{-1/2}).
\end{align*}
If $0<(\mathbb{E}[A_{12}])^2=\mathbb{E}[A_{12}A_{13}]<1$, 
\begin{align*}
    \left|P\left(\frac{F_n - \mathbb{E}[F_n]}{\sigma_n} < w \right) - \Phi(w)\right| \leq N \sim \Theta(n^{-1}).
\end{align*}
\end{theorem}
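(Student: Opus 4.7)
The plan is to adapt the proof of Theorem 2.3, applying Lemma 2.4 directly but tracking how each ingredient behaves when the uniform left-endpoint hypothesis is relaxed. The derivation of $F_n = R_n + n + 1$ via Euler's formula in Section 2 only used the ``general position'' assumption (no three chords meet at an interior point, and no two chords share an endpoint on the circle), which is precisely the hypothesis imposed in this theorem. Thus $F_n = R_n + n + 1$ continues to hold, and the CLT bound for $F_n$ reduces to one for $R_n = \sum_{i<j} A_{ij}$, a sum of $\binom{n}{2}$ identically distributed (by exchangeability of the chords) Bernoulli indicators.

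Next, I would apply Lemma 2.4 to the variables $\{A_{ij}\}_{1 \le i < j \le n}$. Since $A_{ij} \in \{0,1\}$ we have $B \le 1$ and $\mu \le 1$. Taking the natural dependency neighborhood $M_{\{i,j\}} = \{\{k,l\} : |\{i,j\} \cap \{k,l\}| \ge 1\}$ gives $D = 2n-3$, and the variance formula (10) from Section 2 only used exchangeability of the chords (not uniformity of the left endpoint), so it still holds and rearranges as
\[ \sigma_n^2 = \frac{n(n-1)}{2}\, \mathbb{E}[A_{12}]\bigl(1-\mathbb{E}[A_{12}]\bigr) + n(n-1)(n-2)\bigl(\mathbb{E}[A_{12}A_{13}] - (\mathbb{E}[A_{12}])^2\bigr). \]
In case 1 the strict inequality $(\mathbb{E}[A_{12}])^2 < \mathbb{E}[A_{12}A_{13}]$ makes the cubic term have a strictly positive coefficient, so $\sigma_n^2 \sim \Theta(n^3)$ and $\sigma_n^3 \sim \Theta(n^{9/2})$. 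Substituting the ingredients above into Lemma 2.4 yields exactly the same bound $N \le 14 n^4/\sigma_n^3 \sim \Theta(n^{-1/2})$ obtained in Theorem 2.3.

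Case 2 is where the analysis becomes delicate. The equality $\mathbb{E}[A_{12}A_{13}] = (\mathbb{E}[A_{12}])^2$ kills the cubic term and leaves $\sigma_n^2 \sim \Theta(n^2)$, so $\sigma_n \sim \Theta(n)$. With the naive neighborhood of size $2n-3$, Lemma 2.4 only gives $N \sim n^4/n^3 = \Theta(n)$, which is useless. To reach the asserted $\Theta(n^{-1})$ bound, one must exploit that for Bernoulli random variables, being uncorrelated is equivalent to being independent; hence in case 2 every pair $A_{ij}, A_{kl}$ with $|\{i,j\} \cap \{k,l\}| = 1$ is in fact independent (the pairs with disjoint index sets were already independent). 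The goal is then to shrink the dependency neighborhood all the way down to $D = O(1)$, since plugging that together with $B \le 1$, $\mu \le 1$, $\binom{n}{2}$ summands, and $\sigma_n \sim n$ into Lemma 2.4 produces the desired $N \sim n^2/n^3 = \Theta(n^{-1})$.

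The main obstacle is rigorously legitimizing this reduction of $D$: pairwise independence of Bernoulli variables does not in general imply the \emph{joint} independence of the pair $(A_{ij}, A_{kl})$ from the remaining $A$-variables that Lemma 2.4 requires. Closing this gap is the chief technical difficulty, and I see three plausible routes: (i) prove that in case 2 the whole collection $\{A_{ij}\}$ is mutually (not merely pairwise) independent, treating the equality $\mathbb{E}[A_{12}A_{13}] = (\mathbb{E}[A_{12}])^2$ as a structural constraint on the joint chord distribution; (ii) invoke a variant of Stein's method whose neighborhood condition only demands pairwise independence of $A_{ij}$ with variables outside $M_i$; or (iii) decompose $R_n$ into a sum over subfamilies known to be mutually independent (for instance, over edge-disjoint near-perfect matchings of $K_n$) and combine classical Berry--Esseen bounds for each subfamily. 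Once the reduction $D = O(1)$ is justified by one of these routes, the final calculation is immediate.
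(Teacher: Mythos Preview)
Your approach matches the paper's proof essentially step for step: reduce to $R_n=\sum_{i<j}A_{ij}$ via Euler's formula (valid under the general-position hypothesis), plug $B\le 1$, $\mu\le 1$, the number of summands $\binom{n}{2}$, and the appropriate $D$ into Lemma~2.4, and read off the order of $N$ using the variance formula~(10). In case~1 this gives $\sigma_n\sim\Theta(n^{3/2})$, $D\sim\Theta(n)$, and $N\sim\Theta(n^{-1/2})$, exactly as the paper does.

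For case~2 you have in fact been more careful than the paper. The paper's argument is simply: from $(\mathbb{E}[A_{12}])^2=\mathbb{E}[A_{12}A_{13}]$ conclude that $A_{ij}$ is independent of $A_{ik}$, hence take $D\sim\Theta(1)$; combined with $\sigma_n\sim\Theta(n)$ this yields $N\sim\Theta(n^{-1})$. The paper does not address the point you raised, namely that Lemma~2.4 requires the pair $(Y_i,Y_j)$ to be jointly independent of all variables outside $M_i\cup M_j$, whereas uncorrelatedness of Bernoulli variables only delivers \emph{pairwise} independence. So the ``obstacle'' you identified is not something the paper overcomes by a clever device you missed; it is a gap in the paper's own proof as written. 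Your route~(i)---arguing that the equality forces enough structure on the chord law to upgrade to mutual independence---is the natural way to repair it, but the paper does not carry this out.
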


\begin{proof}
Since there is no overlapping point, the reasoning about Euler's characteristic still holds and
\begin{align*}
    F_n = \sum_{i=1}^{n-1}\sum_{j=i+1}^{n} A_{ij} + n + 1.
\end{align*}
$F_n$ is still the sum of Bernoulli random variables, so out next step is to bring everything into Lemma 2.4. We first consider the case where  $0<(\mathbb{E}[A_{12}])^2<\mathbb{E}[A_{12}A_{13}]<1$. According to (11), $\sigma_n \sim \Theta(n^{3/2})$. By only considering the order, we have $\mu \sim \Theta(1)$, $B \sim \Theta(1)$. Since $(\mathbb{E}[A_{12}])^2<\mathbb{E}[A_{12}A_{13}]$, $A_{ij}$ is dependent of $A_{ik}$ and $D\sim \Theta(n^1)$. Finally, the total number of random variables is $\frac{n(n-1)}{2}\sim \Theta(n^2)$. Thus,
\begin{align*}
    \left|P\left(\frac{F_n - \mathbb{E}[F_n]}{\sigma_n} < w \right) - \Phi(w)\right| \leq N \sim \frac{\Theta(n^2)\cdot \Theta(n^1)^2}{\Theta(n^{3/2})^3} \sim \Theta(n^{-1/2}).
\end{align*}
When $0<(\mathbb{E}[A_{12}])^2=\mathbb{E}[A_{12}A_{13}]<1$, according to (11), $\sigma_n \sim \Theta(n^1)$. Again, $\mu \sim \Theta(1)$, $B \sim \Theta(1)$, and the total number of random variables is $\frac{n(n-1)}{2}\sim \Theta(n^2)$. Since $(\mathbb{E}[A_{12}])^2=\mathbb{E}[A_{12}A_{13}]$, $A_{ij}$ is independent of $A_{ik}$ and $D\sim \Theta(1)$. Thus,
\begin{align*}
    \left|P\left(\frac{F_n - \mathbb{E}[F_n]}{\sigma_n} < w \right) - \Phi(w)\right| \leq N \sim \frac{\Theta(n^2)\cdot \Theta(1)^2}{\Theta(n^{1})^3} \sim \Theta(n^{-1}).
\end{align*}
\end{proof}

A natural question one would ask from Theorem 2.6 is what will happen if $(\mathbb{E}[A_{12}])^2>\mathbb{E}[A_{12}A_{13}]$ as Proposition 2.1 is no longer useful. The case is that when we bring such situation into (10), we find $\operatorname{Var}(R_n) < 0$ as $n$ goes to infinity. Therefore, $(\mathbb{E}[A_{12}])^2\leq \mathbb{E}[A_{12}A_{13}]$ at all times.

\section{Acknowledgments}
This paper would not be possible without the help from my mentor Professor Soumik Pal, who has provided the main idea of Stein's method for the problem and commented on every sentence of this paper. I am grateful to Professor David Aldous for suggesting references and giving comments for this paper. I am also thankful to my friend Ami Oka for providing help on the computer simulation.

\bibliographystyle{apacite}
\bibliography{References}

\end{document}